%
%
%

\documentclass[11pt]{article}

\usepackage{amsmath} 
\usepackage[mathscr]{eucal}
\usepackage{amssymb} 
\usepackage{theorem}
\usepackage{tikz-cd}
\usetikzlibrary{positioning}

\topmargin=-1cm
\textheight=21cm
\oddsidemargin=0.5cm
\textwidth=15cm


\theoremstyle{change}  
\newtheorem{theorem}{Theorem}[section] 
\newtheorem{proposition}[theorem]{Proposition}
\newtheorem{corollary}[theorem]{Corollary}
\theorembodyfont{\rmfamily}  
\newtheorem{Remark}[theorem]{Remark}

\newtheorem{definition}[theorem]{Definition}

\newtheorem{nothing}[theorem]{} 

\newenvironment{proof}{\noindent{\bf Proof}\ }{\qed\bigskip}



\renewcommand{\le}{\leqslant} 

\newcommand{\Bl}{\mathrm{Bl}}

\newcommand{\calO}{\mathcal{O}}

\newcommand{\KK}{\mathbb{K}}

\newcommand{\lexp}[2]{\setbox0=\hbox{$#2$} \setbox1=\vbox to
                 \ht0{}\,\box1^{#1}\!#2}

\newcommand{\qed}{\nobreak\hfill
                  \vbox{\hrule\hbox{\vrule\hbox to 5pt
                  {\vbox to 8pt{\vfil}\hfil}\vrule}\hrule}}

\newcommand{\TD}{T^\Delta}


\title{On $p$-permutation equivalences between direct products of blocks}
\author{Deniz Y\i lmaz}
\date{}
\providecommand{\keywords}[1]
{
  \small\smallskip\par	
  \hspace{2ex}\textbf{Keywords:} #1
}
\providecommand{\msc}[1]
{
  \small\smallskip\par	
  \hspace{2ex}\textbf{MSC2020:} #1%
}


\begin{document}

\sloppy

\maketitle

\begin{abstract}
We extend the notion of a $p$-permutation equivalence to an equivalence between direct products of block algebras. We prove that a $p$-permutation equivalence between direct products of blocks gives a bijection between the factors and induces a $p$-permutation equivalence between corresponding blocks.
\end{abstract}

\keywords{block, $p$-permutation equivalence, trivial source modules.}
\msc{20C20, 19A22.}


\section{Introduction}

One of the main themes in representation theory of finite groups is to study equivalences between block algebras. Various authors have defined different notions of equivalences, such as Puig equivalence \cite{Puig1999},  splendid Rickard equivalence \cite{Rickard1996}, derived equivalence, isotypy, perfect isometry \cite{Broue1990}, $p$-permutation equivalence \cite{BoltjeXu2008},\cite{BoltjePerepelitsky2020}, and functorial equivalence \cite{BoucYilmaz2022}. Our aim in this paper is to extend the notion of a $p$-permutation equivalence to an equivalence between direct products of blocks. 

Let $G$ and $H$ be finite groups. Let $p>0$ be a prime and let $(\KK,\calO,k)$ denote a $p$-modular system where $\calO$ is a complete discrete valuation ring with residue field $k$ of characteristic $p$ and field of fractions $\KK$ of characteristic $0$. Suppose that $\calO$ contains a root of unity whose order is equal to the exponent of $G\times H$. 

Let $A$ be a sum of blocks of $\calO G$ and $B$ a sum of blocks of $\calO H$. Let $\TD(A,B)$ denote the Grothendieck group with respect to split short exact sequences of $p$-permutation $(A,B)$-bimodules whose indecomposable summands have twisted diagonal vertices when regarded as $\calO [G\times H]$-modules. In \cite{BoltjePerepelitsky2020}, Boltje and Perepelitsky define a $p$-permutation equivalence between $A$ and $B$ as an element $\gamma\in\TD(A,B)$ such that
\begin{align*}
\gamma\cdot_H \gamma^\circ =[A]\in \TD(A,A) \quad \text{and}\quad  \gamma^\circ\cdot_G\gamma =[B]\in \TD(B,B)
\end{align*}
where $\gamma^\circ$ is the $\calO$-dual of $\gamma$ and where $\cdot_H$ is tensor product over $\calO H$. Among many other interesting and important properties of $p$-permutation equivalences, they proved that if $\gamma$ is a $p$-permutation equivalence between $A$ and $B$, then there is a bijection between the block summands of $A$ and $B$ and $\gamma$ induces a $p$-permutation equivalence between the corresponding blocks, see \cite[Theorem~10.10]{BoltjePerepelitsky2020}. We show that a similar phenomenon holds for $p$-permutation equivalences between direct products of blocks.

\begin{definition}
Let $A_1\times\cdots\times A_n$, $B_1\times\cdots \times B_m$ and $C_1\times\cdots\times C_l$ be direct products of block algebras of finite groups. Let $\gamma=(\gamma_{ij})$ and $\gamma'=(\gamma'_{jk})$ be matrices with entries $\gamma_{ij}\in \TD(A_i,B_j)$ and $\gamma'_{jk}=\TD(B_j,C_k)$. We denote by $\gamma\circ\gamma'$ the product of the matrices $\gamma$ and $\gamma'$. More precisely,
\begin{align*}
(\gamma\circ\gamma')_{i,k}=\sum_{j=1}^m \gamma_{ij}\cdot_{H_j}\gamma'_{jk} \in \TD(A_i,C_k)\,.
\end{align*}
\end{definition}

\begin{definition}
Let $G_1,\cdots,G_n$ and $H_1,\cdots,H_m$ be finite groups. Let $A_i\in\Bl(\calO G_i)$ and $B_j\in\Bl(\calO H_j)$ be block algebras for $i=1,\cdots, n$ and $j=1,\cdots,m$. A {\em $p$-permutation equivalence} between the direct product algebras $A_1\times\cdots\times A_n$ and $B_1\times\cdots \times B_m$ is a matrix $\gamma=\left(\gamma_{ij}\right)$ where $\gamma_{ij}\in \TD(A_i,B_j)$ such that 
\begin{equation*}
\gamma\circ\gamma^\circ=\begin{pmatrix}
[A_1] & 0 & \cdots & 0 \\
0 & [A_2] & 0\cdots &0\\
\vdots & \vdots & \ddots & \vdots \\
0&\cdots& 0 \cdots & [A_n]
\end{pmatrix}
 \quad \text{and} \quad \gamma^\circ\circ\gamma=\begin{pmatrix}
[B_1] & 0 & \cdots & 0 \\
0 & [B_2] & 0\cdots &0\\
\vdots & \vdots & \ddots & \vdots \\
0&\cdots& 0 \cdots & [B_m]
\end{pmatrix}
\end{equation*}
where $\gamma^\circ = ((\gamma_{i,j}^\circ)_{ij})^t$. 
\end{definition}

Our main result is the following.

\begin{theorem}\label{thm main thm}
Let $G_1,\cdots,G_n$ and $H_1,\cdots,H_m$ be finite groups. Let $A_i\in\Bl(\calO G_i)$ and $B_j\in\Bl(\calO H_j)$ be block algebras for $i=1,\cdots, n$ and $j=1,\cdots,m$.  Assume that $\calO$ contains a root of unity of order the exponent of $G_i$ and $H_j$ for each $i$ and $j$.  Let $\gamma=(\gamma_{ij})$ be a $p$-permutation equivalence between the direct products $A_1\times\cdots\times A_n$ and $B_1\times\cdots\times B_m$ of block algebras.  Then $n=m$ and in each row and in each column of $\gamma$, there exists precisely one non-zero element. Moreover, if $\gamma_{ij}$ is the non-zero element in the $i$-th row and $j$-th coloumn, then $\gamma_{ij}$ is a $p$-permutation equivalence between $A_i$ and $B_j$. 
\end{theorem}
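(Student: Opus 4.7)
My plan is to reduce the matrix identities to a character-theoretic isometry, extract from it a signed permutation of irreducible characters, and then leverage the $p$-permutation hypothesis to promote this to a block-level bijection; the single-block equivalence statement then falls out of the matrix identities once the support has been pinned down.

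For each $(i,j)$, the bimodule $\gamma_{ij}$ induces a $\ZZ$-linear map $\Psi_{ij}\colon R_\KK(B_j)\to R_\KK(A_i)$, $[V]\mapsto[\KK\otimes_\calO \gamma_{ij}\otimes_{\calO H_j} V]$, and I assemble these into $\Psi = (\Psi_{ij})\colon \bigoplus_j R_\KK(B_j)\to\bigoplus_i R_\KK(A_i)$. The identities $\gamma\circ\gamma^\circ=\dia([A_i])$ and $\gamma^\circ\circ\gamma=\dia([B_j])$ translate, using that the regular bimodule of a single block acts as the identity on its character ring, into $\Psi\Psi^{\ast}=\id$ and $\Psi^{\ast}\Psi=\id$ with respect to the orthogonal sum of the standard Hermitian forms. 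Hence $\Psi$ is a $\ZZ$-linear isometry between two lattices each equipped with an orthonormal basis (the disjoint unions $\bigsqcup_j\Irr(B_j)$ and $\bigsqcup_i\Irr(A_i)$), and the standard one-line argument (any integral vector of norm $1$ in an orthonormal lattice is $\pm$ a basis vector) forces $\Psi$ to act as a signed bijection between these bases. Since by construction $\Psi_{ij}$ takes values in $R_\KK(A_i)$, for each $\psi\in\Irr(B_j)$ we have $\Psi_{ij}(\psi)=0$ for all but one value $i=i(\psi)$, and $\Psi(\psi)=\pm\varphi$ with $\varphi\in\Irr(A_{i(\psi)})$.

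Once it is shown that $i(\cdot)$ is constant on each $\Irr(B_j)$, with common value $\sigma(j)$, the theorem follows cleanly: for $i\neq\sigma(j)$ all character values of $\gamma_{ij}$ vanish, and since trivial source bimodules are determined up to isomorphism by their $\KK$-character this forces $\gamma_{ij}=0$; the analogous argument from the row identities gives an inverse function $\tau$, whence $n=m$ and $\sigma$ is a bijection, and the unique surviving summand in each diagonal identity gives $\gamma_{i\sigma(i)}\cdot_{H_{\sigma(i)}}\gamma_{i\sigma(i)}^\circ=[A_i]$ and $\gamma_{i\sigma(i)}^\circ\cdot_{G_i}\gamma_{i\sigma(i)}=[B_{\sigma(i)}]$, which is the definition of a $p$-permutation equivalence between the single blocks $A_i$ and $B_{\sigma(i)}$.

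The hard part is the block-constancy of $i(\cdot)$; a generic $\ZZ$-linear isometry of character lattices can mix irreducibles across blocks, so this step must exploit the $p$-permutation structure of the individual $\gamma_{ij}$, not only the integrality of the isometry. My plan here is to port the local analysis from the proof of \cite[Theorem~10.10]{BoltjePerepelitsky2020}: applying the Brauer construction at twisted diagonal $p$-subgroups to the identity $\sum_i\gamma_{ij}^\circ\cdot_{G_i}\gamma_{ij}=[B_j]$ extracts $p$-local invariants of the right-hand side, which is the identity bimodule of a single block, and these must be matched by the sum on the left; compatibility with primitive central idempotents modulo $\mathfrak{p}$ should force concentration on a single value of $i$. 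The careful bookkeeping needed to carry this through when the groups $G_i$ (and $H_j$) vary with the index, together with tracking the signs that arise from the signed-permutation step, is the principal technical burden of the proof.
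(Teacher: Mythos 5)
Your global skeleton (extract a $\ZZ$-linear isometry from the matrix identities, show it is a signed bijection of irreducible characters respecting the block decomposition, then kill the off-diagonal entries and read off the single-block identities) is the same as the paper's, which packages the character-level step as Proposition~\ref{prop perfectisometry} via \cite[Corollary~8.8]{BoltjePerepelitsky2020} applied at the trivial Brauer pair. But there is a genuine gap at the one place where you commit to a concrete argument: the claim that ``trivial source bimodules are determined up to isomorphism by their $\KK$-character,'' and hence that $\mu_{ij}=0$ forces $\gamma_{ij}=0$. This is false. Already for $G=C_p\times C_p$ the genuine trivial source modules $\bigoplus_{|Q|=p}\Ind_Q^G\calO$ and $\calO^{\,p}\oplus\calO G$ have the same ordinary character, so the linearization map $T^\Delta(A_i,B_j)\to R(\KK G_ia_i,\KK H_jb_j)$ has a nontrivial kernel in general (its source has rank governed by local data at \emph{all} $p$-subgroups, while the ordinary character only sees cyclic ones). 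Vanishing of the global character therefore does not imply vanishing in $\TD(A_i,B_j)$. This is exactly why the paper needs Proposition~\ref{prop techprop} and Corollary~\ref{cor permequiv}: one must propagate $\mu_{ij}=0$ to the vanishing of \emph{every} local character $e\mu_{ij}(P,\phi,Q)f$ (via \cite[Corollaries~10.5 and 10.6]{BoltjePerepelitsky2020}) and then invoke the injectivity of the map from $\TD$ into the product of all local character groups (\cite[Proposition~9.2(b)]{BoltjePerepelitsky2020}).

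A secondary issue is that the step you correctly identify as hard --- that the signed bijection of irreducibles does not mix blocks --- is left as a plan (``port the local analysis''), and your plan points the local machinery at the wrong target. In the paper, block-constancy at the global level comes out of the perfection of the characters $\mu_{ij}$ (a consequence of $\gamma_{ij}$ being a class of $p$-permutation bimodules with twisted diagonal vertices) together with \cite[Corollary~8.8]{BoltjePerepelitsky2020}; a bare integral isometry would indeed not suffice, but no Brauer construction at nontrivial subgroups is needed for this part. The heavy local analysis at twisted diagonal $p$-subgroups is indispensable instead for the $\mu_{ij}=0\Rightarrow\gamma_{ij}=0$ implication discussed above. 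Your final observation --- that once each row and column has a single nonzero entry the diagonal identities collapse to $\gamma_{i\sigma(i)}\cdot_{H_{\sigma(i)}}\gamma_{i\sigma(i)}^\circ=[A_i]$ and $\gamma_{i\sigma(i)}^\circ\cdot_{G_i}\gamma_{i\sigma(i)}=[B_{\sigma(i)}]$ --- is correct and agrees with the paper.
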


\section{The proof of the main theorem}

Throughout $G, G_1,\cdots, G_n$, $H,H_1,\cdots, H_m$ denote finite groups. Also, $(\KK,\calO,k)$ denotes a $p$-modular system where $\calO$ is a complete discrete valuation ring with residue field $k$ of characteristic $p$ and field of fractions $\KK$ of characteristic $0$. We suppose that $\calO$ contains a root of unity of order the exponent of $G,G_i,H$ and $H_j$ for all $i$ and $j$. We follow the proof of \cite[Theorem~10.10]{BoltjePerepelitsky2020} closely. 

\begin{nothing}
We denote by $R(\KK G)$ and $R(kG)$ the Grothendieck groups with respect to short exact sequences of $\KK G$-modules and $kG$-modules, respectively, and by $T(\calO G)$ and $T(kG)$ the Grothendieck groups with respect to split short exact sequences of $p$-permutation $\calO G$-modules and $p$-permutation $kG$-modules, respectively.

We denote by $-^*$ the anti-involution $g\mapsto g^{-1}$ of any group algebra of a group $G$. If $A$ is a block of $\calO G$ and $B$ is a block of $\calO H$, then we can regard any $(A,B)$-bimodule $M$ as an $A\otimes B^*$-module via the isomorphism $\calO (G\times H)\cong \calO G\otimes_\calO \calO H$. We set $R(\KK G,\KK H):=R(\KK [G\times H])$ and similarly define $R(A,B)$, $T(A,B)$ etc.

Let $P\le G$ and $Q\le H$ be subgroups and $\phi:Q\to P$ a group isomorphism.  The subgroup $\Delta(P,\phi,Q):=\{(\phi(q),q)\mid\, q\in Q\}\le G\times H$ is called {\em twisted diagonal}. We denote by $\TD(A,B)$ the Grothendieck group with respect to split short exact sequences of $p$-permutation $(A,B)$-bimodules whose indecomposable summands have twisted diagonal vertices when regarded as $\calO [G\times H]$-modules. 
\end{nothing}

\begin{nothing}
Let $\Delta(P,\phi,Q)\le G\times H$ be a $p$-subgroup.  Following the notation in \cite[10.1]{BoltjePerepelitsky2020}, for an element $\gamma\in T^\Delta(\calO G, \calO H)$, we write $\overline{\gamma}(P,\phi,Q)$ for the Brauer construction $\gamma(\Delta(P,\phi,Q))\in T(kN_{G\times H}(\Delta(P,\phi,Q))$.  Set $N:=N_{G\times H}(\Delta(P,\phi,Q)$. The corresponding elements in the commutative diagram (see \cite[9.1(c)]{BoltjePerepelitsky2020})

\begin{center}
\begin{tikzcd}
T(\calO N)\arrow{d}[swap]{\cong}\arrow{r}{\kappa_N}& R(\KK N)\ar{d}{d_N}\\
T(kN)\arrow{r}[swap]{\eta_N}& R(kN)
\end{tikzcd}
\end{center}

will be denoted by 

\begin{center}
\begin{tikzcd}
\gamma(P,\phi,Q)\arrow{d}\arrow{r}& \mu(P,\phi,Q)\ar{d}\\
\overline{\gamma}(P,\phi,Q)\arrow{r}& \nu(P,\phi,Q)
\end{tikzcd}
\end{center}
where $\kappa_N$ is induced by the scalar extension $\KK\otimes_\calO -$, $d_N$ is the decomposition map and $\eta_N$ is induced by the map $[M]\mapsto [M]$.
\end{nothing}

\begin{nothing}\label{noth Lambda}
Let $A$ be a block of $kG$ and $B$ a block of $kH$. Let $(P,e)$ be an $A$-Brauer pair. We denote by $\Lambda_H$ the set of pairs $(\phi, (Q,f))$ where $(Q,f)$ is a $kH$-Brauer pair and $\phi:Q\to P$ is an isomorphism. The group $N_G(P,e)\times H$ acts on $\Lambda_H$ via $(g,h)\cdot (\phi, (Q,f))=(c_g\phi c_h^{-1},\lexp{h}{(Q,f)})$. 

We also set $\Lambda_B\subseteq \Lambda_H$ to be the subset consisting of the pairs $(\phi, (Q,f))$ where $(Q,f)$ is a $B$-Brauer pair.  Note that $\Lambda_B$ is still an $N_G(P,e)\times H$-set via the above action. We denote by $\tilde{\Lambda}_H$ a set of representatives of the $H$-orbits of $\Lambda_H$ and set $\tilde{\Lambda}_B:=\tilde{\Lambda}_H\cap \Lambda_B$.
\end{nothing}

The crucial point in the proof of Theorem~\ref{thm main thm} is to observe that Lemma~10.3 in \cite{BoltjePerepelitsky2020} can be generalized as follows.

\begin{proposition}\label{prop perfectisometry}
Let $A\in \Bl(\calO G)$ be a block algebra and let $B=B_1\times\cdots\times B_m$ be a direct product of block algebras where $B_j\in\Bl(\calO H_j)$.  For each $j\in \{1,\cdots, m\}$, let $\gamma_j\in T^\Delta(A,B_j)$ be such that
\begin{align}\label{eqn equividentity}
\gamma_1\cdot_{H_1} \gamma_1^\circ + \cdots + \gamma_m\cdot_{H_m} \gamma_m^\circ = [A] \in T^\Delta(A,A)\,.
\end{align}
Let also $(P,e)$ be an $A$-Brauer pair. Consider the set of pairs $\Lambda_{B_j}\subseteq \Lambda_{H_j}$ as in \ref{noth Lambda}. Then there exists a unique $j\in\{1,\cdots,m\}$ and a unique $H_j$-orbit of pairs $(\phi_j,(Q_j, f_j))\in\Lambda_{B_j}$ such that
\begin{align*}
e\mu_j(P,\phi_j,Q_j)f_j \neq 0 \quad \text{in} \quad R(\KK C_G(P)e, \KK C_{H_j}(Q_j)f_j)\,.
\end{align*}
Moreover,  $e\mu_j(P,\phi_j,Q_j)f_j $ is a perfect isometry between $\KK C_G(P)e$ and $\KK C_{H_j}(Q_j)f_j$ and $e\nu_j(P,\phi,Q_j)f_j\neq 0$ in $R(kC_G(P)e, kC_{H_j}(Q_j)f_j)$.
\end{proposition}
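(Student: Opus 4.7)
The plan is to generalize the proof of \cite[Lemma~10.3]{BoltjePerepelitsky2020}, whose argument carries over essentially unchanged once the single tensor product on the left hand side is replaced by the sum $\sum_{j=1}^m \gamma_j\cdot_{H_j}\gamma_j^\circ$. First I would apply the Brauer construction at the twisted diagonal subgroup $\Delta(P,\id,P)\le G\times G$ to both sides of (\ref{eqn equividentity}). The right hand side produces the class of $kC_G(P)$ as a bimodule over itself. For each term on the left, I would expand the Brauer construction of $\gamma_j\cdot_{H_j}\gamma_j^\circ$ using the Mackey-type tensor product formula from \cite[9.6]{BoltjePerepelitsky2020}, obtaining a sum of products of the form $\gamma_j(P,\phi,Q)\cdot_{C_{H_j}(Q)}\gamma_j^\circ(Q,\phi^{-1},P)$ indexed by $H_j$-orbits of pairs $(\phi,(Q,f))\in\tilde{\Lambda}_{H_j}$; since $\gamma_j\in T^\Delta(A,B_j)$, only pairs in $\tilde{\Lambda}_{B_j}$ contribute after multiplication by the central primitive idempotents of $B_j$.

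Next I would pass to the characteristic-zero level via the top row of the commutative diagram in \ref{noth Lambda} and multiply both sides by the block idempotent $e$ of $kC_G(P)$. This yields an identity in $R(\KK C_G(P)e,\KK C_G(P)e)$ of the form
\[
\sum_{j=1}^m \sum_{(\phi,(Q,f))\in\tilde{\Lambda}_{B_j}} e\,\mu_j(P,\phi,Q)\,f \;\cdot_{\KK C_{H_j}(Q)f}\; f\,\mu_j^\circ(Q,\phi^{-1},P)\,e \;=\; [\KK C_G(P)e].
\]
Each individual factor $e\,\mu_j(P,\phi,Q)\,f$ is, by the vertex and character analysis of \cite[Section~10]{BoltjePerepelitsky2020}, a virtual $(\KK C_G(P)e,\KK C_{H_j}(Q)f)$-bimodule whose character is supported on twisted diagonal $p'$-elements, so that its product with its dual is either zero or a perfect self-isometry of $\KK C_G(P)e$.

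The main obstacle, exactly as in \cite[Lemma~10.3]{BoltjePerepelitsky2020}, is to deduce from this identity that precisely one summand of the double sum is nonzero. This is accomplished by the same orthogonality argument: factors lying over distinct Brauer pairs $(Q,f)$, and a fortiori factors living over different targets $\KK C_{H_j}(Q)f$ for different $j$, occupy orthogonal pieces of the relevant Grothendieck group, so the enlargement of the index set by the extra label $j$ is purely formal. Since the right hand side is the class of the identity bimodule of the simple algebra $\KK C_G(P)e$, exactly one summand survives, producing a unique $j\in\{1,\ldots,m\}$ and a unique $H_j$-orbit of $(\phi_j,(Q_j,f_j))\in\Lambda_{B_j}$; for that pair, $e\,\mu_j(P,\phi_j,Q_j)\,f_j$ is forced to be a perfect isometry between $\KK C_G(P)e$ and $\KK C_{H_j}(Q_j)f_j$. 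Finally, the nonvanishing of $e\,\nu_j(P,\phi_j,Q_j)\,f_j$ in $R(kC_G(P)e,kC_{H_j}(Q_j)f_j)$ follows from commutativity of the square in \ref{noth Lambda} together with the fact that the decomposition map $d_N$ does not annihilate a perfect isometry.
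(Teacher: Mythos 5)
Your proposal follows essentially the same route as the paper's (sketched) proof: apply the Brauer construction at $\Delta(P)$ to the identity (\ref{eqn equividentity}), expand each $\gamma_j\cdot_{H_j}\gamma_j^\circ$ via the Mackey-type formula over $H_j$-orbits of Brauer pairs, lift to $\KK$ to obtain $[\KK C_G(P)e]=\sum_j\sum e\mu_j(P,\phi_j,Q_j)f_j\cdot(e\mu_j(P,\phi_j,Q_j)f_j)^\circ$, and then invoke the orthogonality result for perfect isometries (Corollary~8.8 of [BoP20]) together with its decomposition-map consequence (Corollary~8.11) to isolate the unique nonzero summand. The key observation --- that the extra index $j$ enlarges the sum only formally, so the orthogonality argument of Lemma~10.3 applies verbatim --- is exactly the point the paper makes.
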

\begin{proof}
The proof of this lemma is similar to the proof of \cite[Lemma~10.3]{BoltjePerepelitsky2020}.  The key point is to observe that Corollary 8.8 in \cite{BoltjePerepelitsky2020} is still applicable in this case.  We add a sketch of the proof for the convenience of the reader.

Apply the Brauer construction with respect to $\Delta(P)$ to Equation~(\ref{eqn equividentity}). The equality
\begin{align*}
\left[kC_G(P)e\right]&=\left[eA\left(\Delta(P)\right)e\right]=e\left(\sum_{j=1}^m \left(\gamma_j\cdot_{H_j}\gamma_j^\circ\right)\left(\Delta(P)\right)\right)e\\&
=\sum_{j=1}^m\sum_{\left(\phi_j,(Q_j,f_j)\right)\in\tilde{\Lambda}_{H_j}}e\overline{\gamma}_j\left(P,\phi_j,Q_j\right) f_j\cdot_{C_{H_j}(Q_j)}f_j\overline{\gamma^\circ}(Q_j,\phi_j^{-1},P)e
\end{align*}
holds in $T^\Delta\left(kC_G(P)e,kC_G(P)e\right)$. Lifting this equation from $k$ to $\calO$ and extending the scalars to $\KK$, we get
\begin{align*}
\left[\KK C_G(P)e\right]=\sum_{j=1}^m\sum_{\left(\phi_j,(Q_j,f_j)\right)\in\tilde{\Lambda}_{H_j}}e\mu_j\left(P,\phi_j,Q_j\right) f_j\cdot_{C_{H_j}(Q_j)}\left(e\mu_j\left(P,\phi_j,Q_j\right) f_j\right)^\circ
\end{align*}
in $R\left(\KK C_G(P)e,\KK C_G(P)e\right)$. The statement follows now from Corollaries 8.8 and 8.11 in \cite{BoltjePerepelitsky2020}.
\end{proof}

Now we can prove a weaker version of Theorem~\ref{thm main thm}.

\begin{corollary}\label{cor numberofproducts}
Let $A=A_1\times\cdots\times A_n$ and $B=B_1\times\cdots\times B_m$ be direct products of block algebras where $A_i\in\Bl(\calO G_i)$ and $B_j\in\Bl(\calO H_j)$ with $a_i$ and $b_j$ their respective identity elements. Assume that there exists a $p$-permutation equivalence $\gamma=(\gamma_{ij})$ between $A$ and $B$.  Then for each $i\in\{1,\cdots,n\}$ there exists a unique $j\in\{1,\cdots,m\}$ such that 
\begin{align*}
\mu_{ij}\neq 0 \quad \text{in} \quad R(\KK G_ia_i, \KK H_j b_j)\,.
\end{align*}
This defines a bijection between the sets $\{1,\cdots,n\}$ and $\{1,\cdots,m\}$.  In particular, we have $n=m$ and if $A_i$ and $B_j$ are corresponding blocks via the bijection above, then $\mu_{ij}$ is a perfect isometry between $\KK G_ia_i$ and $\KK H_jb_j$.  
\end{corollary}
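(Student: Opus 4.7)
The strategy is to extract from the matrix identity $\gamma\circ\gamma^\circ$ one row at a time and feed it into Proposition~\ref{prop perfectisometry}. Fix $i\in\{1,\ldots,n\}$ and read off the $(i,i)$-entry of $\gamma\circ\gamma^\circ$:
\[
\sum_{j=1}^{m} \gamma_{ij}\cdot_{H_j}\gamma_{ij}^\circ \;=\; [A_i] \;\in\; T^\Delta(A_i,A_i).
\]
This is exactly hypothesis~\eqref{eqn equividentity} of Proposition~\ref{prop perfectisometry} with $A:=A_i$ and $\gamma_j:=\gamma_{ij}$, so the proposition is applicable to each row of $\gamma$.

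I would then apply the proposition to the trivial $A_i$-Brauer pair $(P,e):=(1,a_i)$. With $P=1$, the twisted diagonal subgroup $\Delta(1,\id,1)$ is itself trivial, so $N_{G_i\times H_j}(\Delta(1,\id,1))=G_i\times H_j$ and the Brauer construction acts as the identity. Consequently $\mu_{ij}(1,\id,1)$ coincides with the element $\mu_{ij}\in R(\KK G_ia_i,\KK H_jb_j)$ obtained from $\gamma_{ij}$ by scalar extension to $\KK$. Since each $B_j$ is a single block of $\calO H_j$ with idempotent $b_j$, the subset of $\Lambda_{B_j}$ with $Q=1$ has exactly one $H_j$-orbit, represented by $(\id_1,(1,b_j))$. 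The proposition therefore specializes to: there is a unique $j=\sigma(i)\in\{1,\ldots,m\}$ with $\mu_{i,\sigma(i)}\neq 0$ in $R(\KK G_ia_i,\KK H_{\sigma(i)}b_{\sigma(i)})$, and for that $j$ the element $\mu_{i,\sigma(i)}$ is a perfect isometry.

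To upgrade $i\mapsto\sigma(i)$ to a bijection, I would run the same argument with $\gamma$ replaced by $\gamma^\circ$. The identity $\gamma^\circ\circ\gamma$ is the block-diagonal matrix with entries $[B_j]$, so the analogous application to each $j$ produces a unique $\tau(j)\in\{1,\ldots,n\}$ with $\mu_{\tau(j),j}\neq 0$. The uniqueness clauses force $\tau\circ\sigma=\id$ and $\sigma\circ\tau=\id$, whence $n=m$ and $\sigma$ is a bijection. The only nontrivial bookkeeping is the identification $\mu_{ij}(1,\id,1)=\mu_{ij}$ and the verification that $(1,a_i)$ is a valid $A_i$-Brauer pair; once these are in place the proof reduces entirely to Proposition~\ref{prop perfectisometry} applied to rows and then to columns.
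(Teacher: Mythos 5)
Your proposal is correct and follows essentially the same route as the paper: extract the $(i,i)$-entry of $\gamma\circ\gamma^\circ$, apply Proposition~\ref{prop perfectisometry} to the Brauer pair $(\{1\},a_i)$ to get the unique $j$ with $\mu_{ij}\neq 0$ together with the perfect isometry statement, and then use the symmetric argument with $\gamma^\circ\circ\gamma$ to obtain the inverse map and conclude $n=m$. Your explicit bookkeeping of the trivial Brauer pair and of the mutually inverse maps $\sigma,\tau$ merely spells out what the paper compresses into ``by symmetry.''
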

\begin{proof}
Let $i\in\{1,\cdots,n\}$.  Since $\gamma$ is a $p$-permutation equivalence between $A$ and $B$, we have
\begin{align*}
\gamma_{i1}\cdot_{H_1} \gamma_{i1}^\circ + \cdots + \gamma_{im}\cdot_{H_m} \gamma_{im}^\circ = [A_i] \in T^\Delta(A_i,A_i)\,.
\end{align*}
Proposition \ref{prop perfectisometry} applied to the $A_i$-Brauer pair $(\{1\}, a_i)$ implies that there exists a unique $j\in\{1,\cdots,m\}$ such that 
\begin{align*}
\mu_{ij}\neq 0 \quad \text{in} \quad R(\KK G_ia_i, \KK H_j b_j)\,.
\end{align*}
Since by symmetry, a similar statement holds for every element $j\in\{1,\cdots,m\}$ it follows that $\gamma$ is a square matrix and in each row and in each column of $\gamma$ there exists a unique entry with a nonzero image in the corresponding character ring.  The last statement also follows from Proposition \ref{prop perfectisometry}. 
\end{proof}

The following is essentially Lemma~10.4 in \cite{BoltjePerepelitsky2020}. One can easily follow the proof of Lemma~10.4 in \cite{BoltjePerepelitsky2020} and make the necessary changes as we did in the proof of Proposition~\ref{prop perfectisometry} to prove it.

\begin{proposition}\label{prop techprop}
Let $A\in \Bl(\calO G)$ be a block algebra and let $B=B_1\times\cdots\times B_m$ be a direct product of block algebras where $B_j\in\Bl(\calO H_j)$.  For each $j\in \{1,\cdots, m\}$, let $\gamma_j\in T^\Delta(A,B_j)$ be such that
\begin{align}\label{Eqn rowequality}
\gamma_1\cdot_{H_1} \gamma_1^\circ + \cdots + \gamma_m\cdot_{H_m} \gamma_m^\circ = [A] \in T^\Delta(A,A)\,.
\end{align}
Let $(P,e)$ be an $A$-Brauer pair and set $I=N_G(P,e)$ and $X=N_{I\times I}\left(\Delta(P)\right)$.  For each $j\in\{1,\cdots, m\}$ consider the set $\Lambda_{B_j}$ together with its $I\times H_j$-action from \ref{noth Lambda}.  For $\lambda_j=\left(\phi_j,(Q_j, f_j)\right)\in\Lambda_{B_j}$ we set 
\begin{align*}
J(\lambda_j):=N_{H_j}(Q_j,f_j)\,,\quad I(\lambda_j):=N_{(I,\phi_j, J(\lambda_j))}\le I, \quad \text{and} \quad X(\lambda_j):=N_{I\times J(\lambda_j)}(\Delta(P,\phi_j,Q_j))\,.
\end{align*}
Then, $X* X(\lambda_j)=X(\lambda_j)$, and for each $\chi\in\mathrm{Irr}(\KK X(e\otimes e^*))$, there exists a unique $j\in \{1,\cdots, m\}$ and a unique $I\times H_j$-orbit of pairs $\lambda_j=\left(\phi_j,(Q_j, f_j)\right)\in\Lambda_{B_j}$ such that
\begin{align*}
\chi \cdot_G^{X,X(\lambda_j)} e\mu_j(P,\phi_j, Q_j)f_j \neq 0 \quad \text{in} \quad R(\KK [X(\lambda_j)](e\otimes f_j^*)\,.
\end{align*}
Moreover, for each $\lambda_j=\left(\phi_j,(Q_j, f_j)\right)\in\Lambda_{B_j}$ satisfying this condition, one has
\begin{align*}
\chi \cdot_G^{X,X(\lambda_j)} e\mu_j(P,\phi_j, Q_j)f_j \in  \pm \mathrm{Irr}(\KK [X(\lambda_j)](e\otimes f_j^*)\,.
\end{align*}
\end{proposition}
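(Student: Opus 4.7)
The plan is to follow the proof of \cite[Lemma~10.4]{BoltjePerepelitsky2020} step by step, tracking the additional index $j$ that now appears because the right-hand side of~(\ref{Eqn rowequality}) is a sum of $m$ terms rather than a single one. The observation already used in Proposition~\ref{prop perfectisometry} applies unchanged: Corollaries~8.8 and~8.11 of \cite{BoltjePerepelitsky2020} remain valid in this multi-index situation, because in the block decomposition $A\otimes B^*=\bigoplus_{j=1}^{m}A\otimes B_j^*$ the summands are mutually orthogonal as bimodule categories, so sums over $j$ behave formally like a single block-pair term.

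First I would apply the Brauer construction with respect to $\Delta(P)$ to both sides of~(\ref{Eqn rowequality}), multiply on both sides by $e$, and expand each $(\gamma_j\cdot_{H_j}\gamma_j^\circ)(\Delta(P))$ using the Mackey-type expansion, exactly as in the proof of Proposition~\ref{prop perfectisometry}. Lifting the resulting identity from $k$ to $\calO$ and extending scalars to $\KK$ produces an equation of the shape
\begin{align*}
[\KK C_G(P)e]=\sum_{j=1}^{m}\sum_{\lambda_j\in\tilde{\Lambda}_{H_j}} e\mu_j(P,\phi_j,Q_j)f_j\cdot_{C_{H_j}(Q_j)}\bigl(e\mu_j(P,\phi_j,Q_j)f_j\bigr)^\circ
\end{align*}
in $R(\KK C_G(P)e,\KK C_G(P)e)$, which is exactly the input required by Section~8 of \cite{BoltjePerepelitsky2020}.

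Next, following \cite[Lemma~10.4]{BoltjePerepelitsky2020}, I would lift this identity from $C_G(P)$ up to $X=N_{I\times I}(\Delta(P))$ using the fusion-normaliser formalism of Section~9 there, so that each summand on the right is reindexed by an $I\times H_j$-orbit of pairs $\lambda_j=(\phi_j,(Q_j,f_j))\in\Lambda_{B_j}$ and is identified with a tensor product over $X(\lambda_j)$. The purely group-theoretic identity $X\ast X(\lambda_j)=X(\lambda_j)$ required for this step is a standard computation with normalisers of twisted diagonal subgroups, independent of the block-sum structure, and is proved exactly as in the original lemma.

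Finally, given $\chi\in\Irr(\KK X(e\otimes e^*))$, I would tensor the resulting $\KK$-identity with $\chi$ over $X$ and apply Corollaries~8.8 and~8.11 of \cite{BoltjePerepelitsky2020}. These corollaries decompose $\chi$ as a signed sum of the images $\chi\cdot_G^{X,X(\lambda_j)}e\mu_j(P,\phi_j,Q_j)f_j$ indexed by the orbits of the $\lambda_j$; the irreducibility of $\chi$ then forces exactly one such image to be non-zero and forces that image to lie in $\pm\Irr(\KK[X(\lambda_j)](e\otimes f_j^*))$. The main obstacle, and the only point where the argument genuinely extends rather than transcribes \cite[Lemma~10.4]{BoltjePerepelitsky2020}, is to verify that the uniqueness delivered by Corollary~8.8 really pins down the pair $(j,\lambda_j)$ simultaneously and not merely the orbit within a fixed $j$; this is where the orthogonality of the bimodule categories over different pairs $(A,B_j^*)$ becomes essential, and it is exactly the observation that was used at the end of Proposition~\ref{prop perfectisometry}.
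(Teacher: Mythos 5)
Your proposal follows exactly the route the paper takes: the paper offers no detailed argument here, only the remark that one should follow the proof of \cite[Lemma~10.4]{BoltjePerepelitsky2020} and make the same changes as in Proposition~\ref{prop perfectisometry}, namely observing that Corollaries~8.8 and~8.11 of \cite{BoltjePerepelitsky2020} still apply when the sum runs over the extra index $j$. Your sketch fills in the intermediate steps (Brauer construction at $\Delta(P)$, lifting to $X$, applying Corollary~8.8 to pin down the unique pair $(j,\lambda_j)$) in a way consistent with what the paper intends, so it is essentially the same proof.
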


\begin{Remark}
Suppose that we have
\begin{align}
\gamma_1\cdot_{H_1} \gamma_1^\circ + \cdots + \gamma_m\cdot_{H_m} \gamma_m^\circ = [A] \in T^\Delta(A,A)
\end{align}
as in Proposition~\ref{prop perfectisometry}. Since by Proposition~\ref{prop techprop}, the results of Lemma~10.4 in \cite{BoltjePerepelitsky2020} hold, it follows that Corollaries 10.5 and 10.6 in \cite{BoltjePerepelitsky2020} are still valid in this case as well.
\end{Remark}

\begin{corollary}\label{cor permequiv}
Let $A\in \Bl(\calO G)$ be a block algebra with identity element $a$ and let $B=B_1\times\cdots\times B_m$ be a direct product of block algebras where $B_j\in\Bl(\calO H_j)$ with identity element $b_j$.  For each $j\in \{1,\cdots, m\}$, let $\gamma_j\in T^\Delta(A,B_j)$ be such that
\begin{align*}
\gamma_1\cdot_{H_1} \gamma_1^\circ + \cdots + \gamma_m\cdot_{H_m} \gamma_m^\circ = [A] \in T^\Delta(A,A)\,.
\end{align*}
Then there exists a unique $j\in\{1,\cdots, m\}$ such that $\gamma_j\neq 0$ in $T^\Delta(A,B_j)$.
\end{corollary}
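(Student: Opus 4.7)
The plan is to reduce the claim to a vanishing statement for Brauer constructions and invoke Proposition~\ref{prop perfectisometry} at every $A$-Brauer pair. First, apply Proposition~\ref{prop perfectisometry} to the $A$-Brauer pair $(1,a)$. The condition $P=1$ forces $Q=1$, $\phi$ trivial, and $f=b_j$, so the proposition produces a unique $j_0\in\{1,\dots,m\}$ with $a\mu_{j_0}b_{j_0}\neq 0$ in $R(\KK Ga,\KK H_{j_0}b_{j_0})$, and gives $\mu_j=0$ in $R(\KK Ga,\KK H_j b_j)$ for every $j\neq j_0$. In particular $\gamma_{j_0}\neq 0$, which handles existence.

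For uniqueness I show $\gamma_j=0$ for every $j\neq j_0$. Since a virtual $p$-permutation module is determined by its Brauer characters at $p$-subgroups, an element of $\TD(A,B_j)$ is detected by the collection of its Brauer constructions at all twisted diagonal $p$-subgroups; it therefore suffices to prove that $\mu_j(P,\phi,Q)=0$ for every twisted diagonal $p$-subgroup $\Delta(P,\phi,Q)\le G\times H_j$. Decomposing by block idempotents, one writes $\mu_j(P,\phi,Q)=\sum_{e,f}e\mu_j(P,\phi,Q)f$ with $e$ a block idempotent of $kC_G(P)$ under $\Br_P(a)$ and $f$ a block idempotent of $kC_{H_j}(Q)$ under $\Br_Q(b_j)$. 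Applied to the $A$-Brauer pair $(P,e)$, Proposition~\ref{prop perfectisometry} yields a unique index $j^*(P,e)\in\{1,\dots,m\}$ such that the corresponding factor is nonzero for a matched orbit of pairs; for every other index the associated summand vanishes. Hence $\mu_j(P,\phi,Q)=0$ for all $j\neq j_0$ as soon as one knows the constancy $j^*(P,e)=j_0$ for every $A$-Brauer pair $(P,e)$.

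Establishing this constancy is the main obstacle. I would prove it by induction on $|P|$, the base case being the first step above. For the induction step, use that $(1,a)$ is the minimal $A$-Brauer pair, so any $(P,e)$ is joined to $(1,a)$ by a chain of Brauer pair inclusions; along such a chain the matched index is preserved, because the matched local pair $(\phi',(Q',f'))\in\Lambda_{B_{j^*(P',e')}}$ associated with the smaller Brauer pair $(P',e')$ extends under inclusion of $A$-Brauer pairs to a matched pair in the same factor $B_{j^*(P,e)}$, and the uniqueness in Proposition~\ref{prop perfectisometry} then forces $j^*(P',e')=j^*(P,e)$. This Brauer-pair compatibility is the translation to the multi-block setting of Corollaries~10.5 and 10.6 of \cite{BoltjePerepelitsky2020}, whose validity here is precisely what the Remark immediately preceding the corollary asserts; combined with the above reductions this completes the proof.
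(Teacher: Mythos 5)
Your proposal is correct and follows essentially the same route as the paper's proof: identify the unique $j_0$ via the trivial Brauer pair (Corollary~\ref{cor numberofproducts}), propagate the vanishing of the local characters $e\mu_{j'}(P,\phi,Q)f$ to all Brauer pairs using the multi-block versions of Corollaries~10.5 and 10.6 of \cite{BoltjePerepelitsky2020} (as licensed by the preceding Remark), and conclude $\gamma_{j'}=0$ from the injectivity of the local character map in Proposition~9.2(b) of \cite{BoltjePerepelitsky2020}. The only difference is organizational: the paper applies Corollary~10.6 in a single step to the containment of the minimal pair $(\{1\},a\otimes b_{j'})$ in an arbitrary Brauer pair, whereas you run an induction on $|P|$ along chains of Brauer pairs; the underlying input is identical.
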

\begin{proof}
By Corollary \ref{cor numberofproducts}, there exists a unique $j\in \{1,\cdots, m\}$ such that
\begin{align*}
\mu_j \neq 0 \quad \text{in} \quad R(\KK Ga, \KK H_j b_j)\,.
\end{align*}
This means that for any $j'\in\{1,\cdots, m\}$ with $j'\neq j$, one has $
\mu_{j'} = 0$ in $R(\KK Ga, \KK H_{j'} b_{j'})$. For every $A\otimes B_{j'}^*$-Brauer pair $\left(\Delta(P,\phi, Q), (e\otimes f^*)\right)$, since $\left(\{1\}, (a\otimes b_{j'})\right)\le \left(\Delta(P,\phi, Q), (e\otimes f^*)\right)$ holds, \cite[Corollary~10.6]{BoltjePerepelitsky2020} implies that
\begin{align*}
e\mu_{j'}(P,\phi,Q)f = 0 \quad \text{in} \quad R(\KK [C_G(P)]e, \KK[C_{H_j}(Q)]f)\,.
\end{align*}
Therefore, by \cite[Corollary~10.5]{BoltjePerepelitsky2020} one has
\begin{align*}
e\mu_{j'}(P,\phi,Q)f = 0 \quad \text{in} \quad R(\KK [N_{G\times H_j}(\Delta(P,\phi_j,Q_j))](e\otimes f^*)\,.
\end{align*}

This shows that the element $\gamma_{j'}$ is in the kernel of the injective map in \cite[Proposition~9.2(b)]{BoltjePerepelitsky2020} and hence equals to zero. 
\end{proof}

{\em Proof of Theorem~\ref{thm main thm}}: The fact that $n=m$ follows from Corollary~\ref{cor numberofproducts}. For each $i\in\{1,\cdots,n\}$, by Corollary~\ref{cor permequiv}, there exists a unique $j\in\{1,\cdots,m\}$ such that $\gamma_{ij}\neq 0$ in $\TD(A_i,B_j)$. This proves the theorem. \qed

\section*{Acknowledgement}
I would like to thank Robert Boltje for his suggestion to study $p$-permutation equivalences between direct product of blocks. 


\centerline{\rule{5ex}{.1ex}}
\begin{flushleft}
Deniz Y\i lmaz, Department of Mathematics, Bilkent University, 06800 Ankara, Turkey.\\
{\tt d.yilmaz@bilkent.edu.tr}
\end{flushleft}

\end{document}